\documentclass[12pt]{article}

\usepackage[margin=1in]{geometry}
\usepackage{amsthm}
\usepackage{amssymb}
\usepackage{amsmath}
\usepackage{graphicx}
\usepackage{mathdots}
\usepackage{mathtools}
\usepackage{url}
\usepackage{color}
\usepackage{framed}

\newtheorem{theorem}{Theorem}%[section]

\newtheorem{lemma}[theorem]{Lemma}
\newtheorem{proposition}[theorem]{Proposition}
\newtheorem{conjecture}[theorem]{Conjecture}

\theoremstyle{definition}

\title{On the concentration of Gaussian Cayley matrices}

\author{Afonso~S.~Bandeira\footnote{Department of Mathematics, ETH Z\"{u}rich} \qquad Dmitriy Kunisky\footnote{Department of Computer Science, Yale University} \medskip \\ Dustin~G.~Mixon\footnote{Department of Mathematics, The Ohio State University} \footnote{Translational Data Analytics Institute, The Ohio State University} \qquad Xinmeng Zeng\footnotemark[1]}
\date{}

\begin{document}
\maketitle

\begin{abstract}
Given a finite group, we study the Gaussian series of the matrices in the image of its left regular representation.
We propose such random matrices as a benchmark for improvements to the noncommutative Khintchine inequality, and we highlight an application to the matrix Spencer conjecture.
\end{abstract}

\section{Introduction}

Given $A_1,\ldots,A_n\in\mathbb{C}^{d\times d}$, the corresponding \textit{Gaussian matrix series} is given by
\[
X:=\sum_{i=1}^n x_i A_i,
\]
where $\{x_i\}_{i=1}^n$ are independent standard Gaussian variables.
We are interested how the tuple $\{A_i\}_{i=1}^n$ of coefficient matrices influences the expected spectral norm $\mathbb{E}\|X\|$.
To this end, the noncommutative Khintchine inequality of Lust-Piquard and Pisier~\cite{LustPiquardP:91,Pisier:03} delivers an upper bound that is sharp for some commutative tuples, but suboptimal for many noncommutative tuples.
Certain quantifiable notions of noncommutativity have been treated by the improved bounds of Tropp~\cite{Tropp:18} and of Bandeira, Boedihardjo, and van Handel~\cite{BandeiraBvH:21}, but many instances are yet to be understood.
We shine a light on this gap by introducing a noteworthy family of random matrices.

Given a finite group $G$ of order $n$, consider the \textit{left regular representation} $\rho\colon G\to\operatorname{GL}(n)$ defined by $\rho(g)e_h:=e_{gh}$ and extending linearly.
The \textit{Gaussian Cayley matrix} is given by
\[
X_G
:=\sum_{g\in G}x_g\rho(g),
\]
where $\{x_g\}_{g\in G}$ are independent standard Gaussian variables.
By Cayley's theorem, the coefficient matrices $\{\rho(g)\}_{g\in G}$ form a group that is isomorphic to $G$.
These coefficient matrices are special because their noncommutativity can be expressed in the language of group theory and representation theory.
This expressivity makes Gaussian Cayley matrices a particularly revealing class of Gaussian matrix series, as illustrated by our main result:

\begin{theorem}
\label{thm.main}
Suppose $G$ is a finite group of order $n$, let $\Sigma$ denote the set of isomorphism classes of irreducible representations of $G$, let $d_\pi$ denote the degree of $\pi\in\Sigma$, and put
\[
m(G)
:=\inf_{s\geq0}\bigg( s + \sum_{\pi \in \Sigma}\frac{1}{\sqrt{d_\pi}}e^{-d_\pi s^2/2} \bigg).
\]
Then\footnote{We write $x\lesssim y$ if $x\leq Cy$ for some universal constant $C>0$, and $x\asymp y$ if both $x\lesssim y$ and $y\lesssim x$.}
\[
\sqrt{n}
\lesssim\mathbb{E}\|X_G\|
\lesssim\sqrt{n}\cdot m(G)
\lesssim\sqrt{n\log n}.
\]
Furthermore:
\begin{itemize}
\item[(a)]
If $G$ is abelian, then $\mathbb{E}\|X_G\|\asymp\sqrt{n}\cdot m(G)\asymp\sqrt{n\log n}$.
\item[(b)]
If $G$ is simple and nonabelian, then $\mathbb{E}\|X_G\|\asymp\sqrt{n}\cdot m(G)\asymp\sqrt{n}$.
\end{itemize}
\end{theorem}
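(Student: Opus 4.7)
The argument is organized around the classical decomposition $\rho \cong \bigoplus_{\pi \in \Sigma} d_\pi \pi$ of the left regular representation. After a unitary change of basis, this realizes $X_G$ as a block-diagonal matrix whose blocks are copies of $M_\pi := \sum_{g \in G} x_g \pi(g) \in \mathbb{C}^{d_\pi \times d_\pi}$, with the block for $\pi$ appearing with multiplicity $d_\pi$, so that
\[
\|X_G\| = \max_{\pi \in \Sigma} \|M_\pi\|.
\]
The analysis thus reduces to controlling each block $M_\pi$ and then combining via a maximum.

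By Schur orthogonality, the matrices $\{\pi(g)\}_{g \in G}$ are orthogonal in the Frobenius inner product, each of squared norm $n/d_\pi$. Consequently $M_\pi$ behaves like a $d_\pi \times d_\pi$ Gaussian matrix with entries of variance $n/d_\pi$, giving $\mathbb{E}\|M_\pi\| \asymp \sqrt{n}$, and $x \mapsto \|M_\pi\|$ is $\sqrt{n/d_\pi}$-Lipschitz. Gaussian concentration then yields
\[
\mathbb{P}\bigl(\|M_\pi\|/\sqrt{n} > c + u\bigr) \leq e^{-d_\pi u^2/2}, \qquad u \geq 0,
\]
for a universal constant $c$. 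The first inequality $\sqrt{n} \lesssim \mathbb{E}\|X_G\|$ follows immediately by considering any single $\pi$. For the middle inequality, I set $Y_\pi := \|M_\pi\|/\sqrt{n}$ and use the pointwise bound $\max_\pi Y_\pi \leq c + s + \sum_\pi (Y_\pi - c - s)_+$; integrating the tail gives $\mathbb{E}(Y_\pi - c - s)_+ \lesssim d_\pi^{-1/2} e^{-d_\pi s^2/2}$, and taking the infimum over $s \geq 0$ produces $\mathbb{E}\|X_G\|/\sqrt{n} \lesssim m(G)$. The universal bound $m(G) \lesssim \sqrt{\log n}$ then follows by choosing $s = \sqrt{2\log n}$ and using $|\Sigma| \leq n$ together with $d_\pi \geq 1$.

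For part (a), with $G$ abelian every $\pi$ is one-dimensional, $|\Sigma| = n$, and $X_G$ is diagonalized by the character transform; its eigenvalues are essentially-orthogonal complex Gaussians of variance $n$. Standard extreme-value asymptotics for $n$ such Gaussians yield $\mathbb{E}\|X_G\| \gtrsim \sqrt{n\log n}$, and direct optimization of the scalar expression $s + n\,e^{-s^2/2}$ (which is $m(G)$ in this case) shows $m(G) \asymp \sqrt{\log n}$, so the three quantities are comparable.

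For part (b), the universal lower bound already supplies $\mathbb{E}\|X_G\| \gtrsim \sqrt{n}$, so what remains is to show $m(G) = O(1)$ when $G$ is simple nonabelian. With $s$ chosen as a small absolute constant, I would group irreps by dimension: Plancherel's identity $\sum_\pi d_\pi^2 = n$ bounds the number of irreps of dimension $D$ by $n/D^2$, so the tail sum is controlled by $\sum_{D \geq d_{\min}(G)} n D^{-5/2} e^{-Ds^2/2}$. The principal obstacle is to produce strong enough lower bounds on the minimum nontrivial character degree $d_{\min}(G)$, and more generally on the count of small-dimensional irreducible representations, so as to keep this sum bounded uniformly in $n$. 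This is the technical heart of the theorem and draws on the representation theory of finite simple groups (for instance Jordan-type bounds on the minimum degree of a faithful representation, combined with a case analysis over the alternating, classical Lie-type, exceptional Lie-type, and sporadic families).
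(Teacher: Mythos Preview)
Your proposal is correct and follows essentially the same route as the paper: block-diagonalize via the decomposition of the regular representation, use Schur orthogonality to identify each block with a (scaled) Ginibre-type matrix, apply Gaussian concentration block-by-block, and combine via a truncated tail integral to obtain the $m(G)$ bound; the abelian case is handled by extreme-value asymptotics and the simple nonabelian case by classification-based control of small-degree irreducibles. Two places where the paper is slightly more explicit than your sketch: it first passes to the \emph{complex} Gaussian Cayley matrix $Z_G$ (at the cost of constants) so that the blocks are genuinely independent Ginibre matrices rather than merely ``behaving like'' them, and for part~(b) it packages the needed representation-theoretic input as the single clean statement that at most a bounded number of irreducibles have degree below $\epsilon\log|G|$ (your Plancherel-counting argument would still need exactly this input, since the trivial and, for $A_n$, the standard representation keep $d_{\min}(G)$ too small on its own).
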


As we discuss in the following section, neither the noncommutative Khintchine inequality~\cite{LustPiquardP:91,Pisier:03} nor its existing improvements~\cite{Tropp:18,BandeiraBvH:21} are sharp enough to prove Theorem~\ref{thm.main}.
In Section~3, we prove Theorem~\ref{thm.main} using tools from representation theory.
Section~4 then applies Theorem~\ref{thm.main} to resolve a special case of the matrix Spencer conjecture~\cite{Zouzias:12,Meka:14}.
We conclude in Section~5 with a discussion.

\section{The failure of existing bounds}

We start with the noncommutative Khintchine inequality of Lust-Piquard and Pisier~\cite{LustPiquardP:91,Pisier:03}:

\begin{proposition}
\label{prop.nck}
Given self-adjoint $A_1,\ldots,A_n\in\mathbb{C}^{d\times d}$, the corresponding Gaussian matrix series $X$ satisfies
\[
\sigma(X)
\lesssim \mathbb{E}\|X\|
\lesssim \sigma(X)\log^{1/2} d,
\qquad
\sigma(X):=\bigg\|\sum_{i=1}^n A_i^2\bigg\|^{1/2}.
\]
\end{proposition}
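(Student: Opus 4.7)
The plan is to establish the lower and upper halves of the inequality separately, using standard Gaussian matrix concentration tools.

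For the lower bound, I would first use convexity of the operator norm together with self-adjointness of $X$ to write
\[
\mathbb{E}\|X\|^2
= \mathbb{E}\|X^2\|
\geq \|\mathbb{E}X^2\|
= \bigg\|\sum_{i=1}^n A_i^2\bigg\|
= \sigma(X)^2,
\]
where Jensen's inequality gives the middle step and $\mathbb{E}[x_ix_j]=\delta_{ij}$ gives the penultimate equality. Then I would invoke the Kahane--Khintchine inequality for Gaussian series valued in a Banach space, which guarantees $(\mathbb{E}\|X\|^2)^{1/2}\lesssim \mathbb{E}\|X\|$. Combining these two facts produces $\mathbb{E}\|X\|\gtrsim \sigma(X)$.

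For the upper bound I would run the matrix Laplace-transform / Chernoff argument. For each $\theta>0$,
\[
\mathbb{E}\lambda_{\max}(X)
\leq \tfrac{1}{\theta}\log\mathbb{E}\operatorname{tr}\exp(\theta X).
\]
Applying Lieb's concavity theorem to peel off one Gaussian coordinate at a time (the Tropp--Ahlswede--Winter framework), one obtains the trace-MGF bound
\[
\mathbb{E}\operatorname{tr}\exp(\theta X)
\leq \operatorname{tr}\exp\!\Big(\tfrac{\theta^2}{2}\sum_{i=1}^n A_i^2\Big)
\leq d\cdot\exp\!\Big(\tfrac{\theta^2}{2}\sigma(X)^2\Big).
\]
Optimizing at $\theta=\sqrt{2\log d}/\sigma(X)$ yields $\mathbb{E}\lambda_{\max}(X)\leq\sigma(X)\sqrt{2\log d}$, and repeating the argument for $-X$ (which has the same distribution) together with $\|X\|=\max(\lambda_{\max}(X),-\lambda_{\min}(X))$ delivers the upper bound.

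The main obstacle is the trace-MGF inequality in the upper-bound argument: because the $A_i$ need not commute, $\mathbb{E}\exp(\theta X)$ is not a product of scalar MGFs, and the naive scalar Chernoff computation breaks down. Lieb's concavity theorem (equivalently, an argument via Golden--Thompson) is the noncommutative ingredient that lets one control $\mathbb{E}\operatorname{tr}\exp(\theta\sum_i x_i A_i)$ by $\operatorname{tr}\exp(\tfrac{\theta^2}{2}\sum_i A_i^2)$, and the logarithmic factor $\sqrt{\log d}$ enters precisely through the dimensional estimate $\operatorname{tr}\exp(M)\leq d\,\|\exp(M)\|$ used at the end.
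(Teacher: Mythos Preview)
The paper does not actually prove Proposition~\ref{prop.nck}; it is simply quoted as the noncommutative Khintchine inequality of Lust-Piquard and Pisier, with references to~\cite{LustPiquardP:91,Pisier:03}, and then used as a black box. So there is no in-paper proof to compare against.

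Your argument is correct and follows the standard modern route. The lower bound via Jensen on $\|\cdot\|$ plus Kahane--Khintchine (equivalently, Gaussian hypercontractivity to pass from the second moment to the first) is the usual one. For the upper bound you run the Ahlswede--Winter/Tropp matrix Laplace-transform method with Lieb's concavity, which is different in flavor from the original Lust-Piquard--Pisier proof (that proceeds through Schatten-$p$ moment estimates and interpolation) but reaches the same conclusion and is arguably more direct. One small technical point: the last step, going from a bound on $\mathbb{E}\lambda_{\max}(X)$ and $\mathbb{E}\lambda_{\max}(-X)$ separately to a bound on $\mathbb{E}\|X\|=\mathbb{E}\max(\lambda_{\max}(X),\lambda_{\max}(-X))$, is not literally justified by ``repeating the argument,'' since expectation does not commute with $\max$. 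The clean fix is to note that $e^{\theta\|X\|}\leq\operatorname{tr}e^{\theta X}+\operatorname{tr}e^{-\theta X}$ pointwise, apply your trace-MGF bound to each term, and optimize in $\theta$; this only costs an extra factor of $2$ inside the logarithm.
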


The upper bound in Proposition~\ref{prop.nck} is sharp, for example, when $n=d$ and $A_i=e_ie_i^*$ for each $i\in\{1,\ldots,d\}$.
In this case, $\sigma(X)=1$ and $\|X\|$ is the maximum of $d$ independent half normal variables, the expectation of which is $\Theta(\sqrt{\log d})$; see for example Section~3.3 in~\cite{LedouxT:91}.
The lower bound in Proposition~\ref{prop.nck} is sharp, for example, when $n=\binom{d+1}{2}$ and $d$ of the $A_i$'s take the form $\sqrt{2}e_je_j^*$, while the other $\binom{d}{2}$ $A_i$'s take the form $e_je_k^*+e_ke_j^*$ with $j<k$.
In this case, $\sigma(X)=\sqrt{d}$ and
$X$ is drawn from the Gaussian orthogonal ensemble, which is known to satisfy $\mathbb{E}\|X\|\asymp\sqrt{d}$; see for example Section~2.3 in~\cite{Tao:12}.

Notice that Proposition~\ref{prop.nck} is written in terms of self-adjoint coefficient matrices.
The general setting can be mapped to the self-adjoint case by \textit{dilation}:
\[
\tilde{A}:=\left[\begin{array}{cc}0&A^*\\A&0\end{array}\right].
\]
Observe that $A\in\mathbb{C}^{d\times d}$ implies $\tilde{A}\in\mathbb{C}^{2d\times 2d}$ with
\[
\|\tilde{A}\|^2
=\|\tilde{A}^2\|
=\max\{\|A^*A\|,\|AA^*\|\}
=\|A\|^2.
\]
Furthermore, if $X=\sum_i x_iA_i$, then $\tilde{X}=\sum_i x_i\tilde{A}_i$.

Let's consider Proposition~\ref{prop.nck} in the context of Gaussian Cayley matrices.
Suppose $G$ is a finite group of order $n$.
Then for each $g\in G$, the square of the dilation of $\rho(g)$ is $I_{2n}$, and so $\sigma(\tilde{X}_G)=\|nI_{2n}\|^{1/2}=\sqrt{n}$.
As such, Proposition~\ref{prop.nck} gives
\[
\sqrt{n}\lesssim\mathbb{E}\|X_G\|\lesssim\sqrt{n\log n},
\]
regardless of $G$.
Apparently, parts~(a) and~(b) of Theorem~\ref{thm.main} identify extreme cases of the noncommutative Khintchine inequality.
To prove Theorem~\ref{thm.main}, we need to more carefully account for interactions between coefficient matrices in the Gaussian matrix series.

Tropp~\cite{Tropp:18} obtained the following improvement over the upper bound in Proposition~\ref{prop.nck}:

\begin{proposition}
\label{prop.tropp}
Given self-adjoint $A_1,\ldots,A_n\in\mathbb{C}^{d\times d}$, the corresponding Gaussian matrix series $X$ satisfies
\[
\mathbb{E}\|X\|
\lesssim \sigma(X)\log^{1/4}d+w(X)\log^{1/2}d,
\]
where $\sigma(X)$ is defined in Proposition~\ref{prop.nck} and
\[
w(X):=\max_{Q_1,Q_2,Q_3\in\operatorname{U}(d)}\bigg\|\sum_{i=1}^n\sum_{j=1}^n A_iQ_1A_jQ_2A_iQ_3A_j\bigg\|^{1/4}.
\] 
\end{proposition}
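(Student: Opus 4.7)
The plan is to apply the moment method with parameter $p \asymp \log d$, combined with a combinatorial analysis of the Wick expansion that separates non-crossing Gaussian pairings (dominated by $\sigma(X)$) from crossing ones (controlled by $w(X)$). I would start from the standard bound
\[
\mathbb{E}\|X\| \leq d^{1/(2p)}\bigl(\mathbb{E}\operatorname{tr}(X^{2p})\bigr)^{1/(2p)},
\]
valid for any integer $p \geq 1$ since $X$ is self-adjoint, and expand $X^{2p}$ in the Gaussian basis. Applying Isserlis' formula yields
\[
\mathbb{E}\operatorname{tr}(X^{2p}) = \sum_{\pi \in \mathcal{P}_2(2p)} T_\pi, \qquad T_\pi := \sum_{\substack{i_1,\ldots,i_{2p} \\ i_a = i_b\ \text{if}\ \{a,b\}\in\pi}} \operatorname{tr}(A_{i_1}\cdots A_{i_{2p}}),
\]
where $\mathcal{P}_2(2p)$ is the set of perfect matchings on $\{1,\ldots,2p\}$ arranged cyclically (the trace being cyclic).

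Next, I would classify matchings by their number of chord crossings $c(\pi)$. A non-crossing matching admits a straightforward bound: iteratively contracting an innermost adjacent pair $\{a,a+1\}$ produces the factor $\sum_i A_i^2$, which has operator norm $\sigma(X)^2$, so after $p$ such contractions one obtains $|T_\pi| \leq d\cdot\sigma(X)^{2p}$. Each crossing $\{a,c\},\{b,d\}$ with $a<b<c<d$ instead produces, after the remaining non-crossing pairs are contracted, an expression of the form $\operatorname{tr}(M_1 A_i M_2 A_j M_3 A_i M_4 A_j)$ summed over $i,j$; extracting unitary factors from the intermediate products $M_r$ via polar decomposition allows this contribution to be absorbed into $w(X)^4$ in place of two $\sigma(X)^2$ factors. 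Iterating this reduction on each crossing yields a per-matching bound $|T_\pi| \lesssim d\cdot \sigma(X)^{2(p-2c(\pi))}\,w(X)^{4c(\pi)}$, up to combinatorial constants.

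Summing over $\pi$, with the count $N_{p,c} := |\{\pi : c(\pi) = c\}|$ controlled by classical estimates (of Touchard--Riordan type), and grouping the contributions by $c$, I would obtain a bound of the form
\[
\bigl(\mathbb{E}\operatorname{tr}(X^{2p})\bigr)^{1/(2p)} \lesssim d^{1/(2p)}\bigl(p^{1/4}\sigma(X) + p^{1/2}w(X)\bigr).
\]
Setting $p = \lceil\log d\rceil$ kills the $d^{1/(2p)}$ factor and produces the claimed inequality.

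\textbf{The main obstacle} is the precise combinatorial calibration in the second paragraph: obtaining exactly the exponents $1/4$ and $1/2$ requires that $N_{p,c}$ grow slowly enough, relative to the savings $(w(X)/\sigma(X))^{4c}$ per crossing, for the sum over $c$ to be dominated by an intermediate crossing count rather than by the non-crossing ones alone. Exploiting the freedom of arbitrary unitaries $Q_1, Q_2, Q_3$ in the definition of $w(X)$ — as opposed to forcing $Q_r = I$ — is what allows the per-matching reduction to go through without extra dimensional loss, and this is the technical crux of the argument.
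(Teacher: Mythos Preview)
The paper does not prove this proposition at all: it is quoted from Tropp~\cite{Tropp:18} as a known result, with no argument given. So there is no ``paper's own proof'' to compare against; the relevant comparison is to Tropp's original argument.

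Your sketch is in the right spirit --- moment method at $p\asymp\log d$, Wick expansion, and a crossing/non-crossing dichotomy is indeed how Tropp's bound is obtained --- but the per-matching estimate you state cannot be right as written. You claim $|T_\pi|\lesssim d\,\sigma(X)^{2(p-2c(\pi))}w(X)^{4c(\pi)}$, where $c(\pi)$ is the number of crossings. But a single chord can participate in many crossings: for the matching $\{1,4\},\{2,5\},\{3,6\}$ on six points there are three crossings and only three chords, so your formula would produce a negative power of $\sigma(X)$. The reduction ``each crossing trades $\sigma^4$ for $w^4$'' double-counts chords that are shared among crossings, and the iteration you describe (contract non-crossing pairs first, then peel off crossings one at a time) does not terminate in the way you suggest, because after removing one crossing the remaining chords need not form a matching with one fewer crossing in any controlled sense.

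Tropp's actual argument does not proceed by a direct crossing count. He sets up a recursion for the trace moments via Gaussian integration by parts; the first application produces the $\sigma^2$ factor, and the remainder term --- where two copies of $A_i$ are separated by an intervening block --- is what $w(X)^4$ is designed to absorb. The exponents $1/4$ and $1/2$ then fall out of solving the resulting moment recursion, not from a Touchard--Riordan count. Your proposal correctly identifies the combinatorial calibration as the crux, but the specific mechanism you propose for it would need to be replaced by something closer to Tropp's recursive scheme (or, equivalently, a parameterization by the number of \emph{crossed chords} rather than the number of crossings).
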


Proposition~3.2 in~\cite{Tropp:18} gives that $w(X)\leq\sigma(X)$, from which it follows that Proposition~\ref{prop.tropp} improves upon Proposition~\ref{prop.nck}.
This improvement is insufficient to prove Theorem~\ref{thm.main}:

\begin{lemma}
Suppose $G$ is a finite group of order $n$.
Then $w(\tilde{X}_G)=\sqrt{n}$.
\end{lemma}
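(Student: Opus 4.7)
The plan is to sandwich $w(\tilde X_G)$ between matching upper and lower bounds, both equal to $\sqrt{n}$.

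For the upper bound, I would cite Proposition~3.2 of~\cite{Tropp:18}, which asserts $w(X) \leq \sigma(X)$ in full generality. Since the paper has already observed $\sigma(\tilde X_G) = \sqrt{n}$, this immediately yields $w(\tilde X_G) \leq \sqrt{n}$ and no further work is required on this side.

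For the matching lower bound, I would specialize the supremum in the definition of $w$ to $Q_1 = Q_2 = Q_3 = I_{2n}$ and show that the resulting norm is at least $n^2$. The key observation is that $\tilde\rho(g)\tilde\rho(h)$ is already block diagonal: using $\rho(g)^* = \rho(g^{-1})$, a direct $2 \times 2$ block multiplication gives
\[
\tilde\rho(g)\tilde\rho(h) = \operatorname{diag}\bigl(\rho(g^{-1}h),\,\rho(gh^{-1})\bigr),
\]
so squaring and summing over $g,h \in G$ produces the block-diagonal matrix $n \cdot \operatorname{diag}(T,T)$, where $T := \sum_{u \in G} \rho(u^2)$ and the factor $n$ arises because the maps $(g,h) \mapsto g^{-1}h$ and $(g,h) \mapsto gh^{-1}$ are each $n$-to-one onto $G$. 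It therefore suffices to prove $\|T\| = n$.

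The last step is almost immediate: the all-ones vector $\mathbf{1} = \sum_{h \in G} e_h$ is fixed by every left-regular permutation $\rho(g)$, so $T \mathbf{1} = n \mathbf{1}$ and $\|T\| \geq n$, while the triangle inequality across $n$ unitary summands yields $\|T\| \leq n$. Combining everything gives $w(\tilde X_G) \geq n^{1/2}$, matching the upper bound. I do not anticipate a genuine obstacle here; the block-diagonal form of $\tilde\rho(g)\tilde\rho(h)$ essentially trivializes the computation of $w$ once the right choice of unitaries is made, and the only piece with any content is the one-line verification that $\mathbf{1}$ is an eigenvector of $T$.
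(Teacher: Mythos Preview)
Your proposal is correct and essentially identical to the paper's proof: both obtain the upper bound from $w\le\sigma=\sqrt{n}$ via Proposition~3.2 of~\cite{Tropp:18}, and for the lower bound both set $Q_1=Q_2=Q_3=I_{2n}$, compute $\sum_{g,h}(\tilde\rho(g)\tilde\rho(h))^2 = nI_2\otimes T$ with $T=\sum_{u}\rho(u^2)$, and use $T\mathbf{1}=n\mathbf{1}$ to get $\|T\|\ge n$. The only cosmetic differences are that the paper records the (unnecessary) symmetry of $T$, while you record the (equally unnecessary) triangle-inequality bound $\|T\|\le n$.
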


\begin{proof}
For each $g\in G$, let $A_g$ denote the dilation of $\rho(g)$.
Then
\[
(A_gA_h)^2
=\left[\begin{array}{cc}\rho(g^{-1}h)&0\\0&\rho(gh^{-1})\end{array}\right]^2
=\left[\begin{array}{cc}\rho((g^{-1}h)^2)\\0&\rho((gh^{-1})^2)\end{array}\right],
\]
and so
\[
\sum_{g,h\in G}(A_gA_h)^2
=nI_2\otimes\sum_{g\in G}\rho(g^2).
\]
Observe that $S:=\sum_{g\in G}\rho(g^2)$ is symmetric:
\[
S^\top
=\sum_{g\in G}\rho(g^2)^\top
=\sum_{g\in G}\rho((g^2)^{-1})
=\sum_{g\in G}\rho((g^{-1})^2)
=S,
\]
and so $\|S\|$ equals the largest eigenvalue of $S$ (in magnitude).
Considering
\[
S\mathbf{1}
=\sum_{g\in G}\rho(g^2)\mathbf{1}
=\sum_{g\in G}\mathbf{1}
=n\mathbf{1},
\]
we have $\|S\|\geq n$, and so 
\[
w(\tilde{X}_G)
\geq\bigg\|\sum_{g,h\in G}(A_gA_h)^2\bigg\|^{1/4}
=\| nI_2\otimes S\|^{1/4}
=n^{1/4}\|S\|^{1/4}
\geq \sqrt{n}.
\]
The opposite bound $w(\tilde{X}_G)\leq\sigma(\tilde{X}_G)=\sqrt{n}$ implies equality.
\end{proof}

We conclude this section by discussing the following improvement by Bandeira, Boedihardjo, and van Handel~\cite{BandeiraBvH:21} over Proposition~\ref{prop.nck}:

\begin{proposition}
\label{prop.BBvH}
Given self-adjoint $A_1,\ldots,A_n\in\mathbb{C}^{d\times d}$, the corresponding Gaussian matrix series $X$ satisfies
\[
\mathbb{E}\|X\|
\lesssim \sigma(X)+v(X)^{1/2}\sigma(X)^{1/2}\log^{3/4}d,
\]
where $\sigma(X)$ is defined in Proposition~\ref{prop.nck} and $v(X)^2$ is the spectral norm of the covariance matrix $\operatorname{Cov}(X)$ of the entries of $X$, defined by $\operatorname{Cov}(X)_{ij,kl}:=\mathbb{E}[X_{ij}\overline{X_{kl}}]$. 
\end{proposition}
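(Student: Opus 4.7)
The plan is to compare $X$ to its free-probability analogue: let $s_1,\ldots,s_n$ be free standard semicircular elements in a tracial $C^*$-algebra and set $X_{\mathrm{free}}:=\sum_i A_i\otimes s_i$. The Haagerup--Lehner bound gives $\|X_{\mathrm{free}}\|\leq 2\sigma(X)$, so it suffices to show that $\mathbb{E}\|X\|$ exceeds $\|X_{\mathrm{free}}\|$ by at most $C v(X)^{1/2}\sigma(X)^{1/2}\log^{3/4}d$; the first term of the stated bound then absorbs the free contribution.

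The workhorse is the trace moment method at level $p\asymp\log d$, where $\mathbb{E}\|X\|^{2p}\leq\mathbb{E}\operatorname{tr}(X^{2p})$ loses only a factor $d^{1/(2p)}=O(1)$ after taking $(2p)$-th roots. Expanding and applying Wick's formula,
\[
\mathbb{E}\operatorname{tr}(X^{2p})=\sum_{\pi\in\mathcal{P}_2(2p)}\ \sum_{i_1,\ldots,i_{2p}}\mathbf{1}[\ker i\succeq\pi]\operatorname{tr}(A_{i_1}\cdots A_{i_{2p}}),
\]
where $\mathcal{P}_2(2p)$ is the set of pair partitions of $\{1,\ldots,2p\}$. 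Drawn as chord diagrams, the pairings split into non-crossing (planar) and crossing ones. The planar pairings reproduce $\mathbb{E}\operatorname{tr}(X_{\mathrm{free}}^{2p})$, which via the Catalan count is at most $d(2\sigma(X))^{2p}$, so extracting $(2p)$-th roots yields the $\sigma(X)$ contribution.

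The main obstacle is bounding the contribution of crossing pairings in terms of $v(X)$. Each crossing can be ``resolved'' by a Cauchy--Schwarz-style contraction whose cost is captured by the covariance tensor, i.e.\ by $\|\operatorname{Cov}(X)\|=v(X)^2$ rather than by $\sigma(X)^2$. A pairing with $k$ crossings should therefore contribute at most $\sigma(X)^{2p}(v(X)/\sigma(X))^{2k}$ up to a combinatorial weight counting chord diagrams of genus $\sim k$, and a genus-expansion estimate must be invoked to show these weights grow slowly enough to make the series summable. Optimizing the combinatorial weights against the power of $\log d$ coming from $p$, and extracting $(2p)$-th roots, yields the exponent $\log^{3/4}d$: the extra quarter-power over Khintchine's $\log^{1/2}d$ reflects that each crossing is paid for by the covariance rather than by the marginal variance. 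This is the heart of the intrinsic freeness framework of Bandeira--Boedihardjo--van Handel, and the genus-expansion bookkeeping is the technically demanding step I would expect to consume most of the effort.
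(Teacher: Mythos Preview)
The paper does not prove this proposition at all: it is quoted as a result from the literature, with attribution to Bandeira, Boedihardjo, and van~Handel~\cite{BandeiraBvH:21}, and is used only as a point of comparison (the paragraph following the statement computes $v(\tilde{X}_G)$ to show that the bound is too weak for Gaussian Cayley matrices). There is therefore no in-paper proof to compare against.

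That said, your sketch is a faithful high-level outline of the argument in~\cite{BandeiraBvH:21}: compare $X$ to the free model $X_{\mathrm{free}}$, control the norm of the latter by $2\sigma(X)$ via Lehner's formula, expand $\mathbb{E}\operatorname{tr}(X^{2p})$ by Wick, separate non-crossing pairings (which reproduce the free moments) from crossing ones, and bound the crossing contribution using the covariance parameter $v(X)$ together with a genus-type combinatorial estimate, finally taking $p\asymp\log d$. The part you flag as ``technically demanding'' is indeed where the work lies in~\cite{BandeiraBvH:21}; your sketch does not actually carry it out, and the heuristic ``a pairing with $k$ crossings contributes $\sigma(X)^{2p}(v(X)/\sigma(X))^{2k}$'' is not literally how the bound is organized there (the relevant combinatorial parameter is closer to the number of crossing \emph{chords}, and the interpolation that produces the exponent $3/4$ is more delicate than a direct optimization over $k$). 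So as a proof this is incomplete, but as a roadmap to the cited reference it is accurate.
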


Proposition~\ref{prop.BBvH} improves upon Proposition~\ref{prop.nck} in the regime where $v(X)$ is much smaller than $\sigma(X)/\sqrt{\log d}$, e.g., when $X$ is drawn from the Gaussian orthogonal ensemble.
However, the Gaussian Cayley matrix does not reside in this regime.
To see this, suppose $G$ is a finite group of order $n$.
Then $\tilde{X}_G$ has a total of $4n^2$ entries, $2n^2$ of which are identically zero, and the other $2n^2$ can be partitioned into $n$ independent batches of size $2n$, where the entries in each batch are identical standard Gaussian variables.
As such, $\operatorname{Cov}(\tilde{X}_G)$ is block diagonal with $n$ different $2n\times 2n$ all-ones blocks, and otherwise zero.
It follows that $v(\tilde{X}_G)=\|\operatorname{Cov}(\tilde{X}_G)\|^{1/2}=\sqrt{2n}$.

\section{Sharp bounds using representation theory}

In this section, we use representation theory to prove Theorem~\ref{thm.main}.
For this approach, it is more natural to consider the \textit{complex Gaussian Cayley matrix} defined by
\[
Z_G
:=\sum_{g\in G}z_g\rho(g),
\]
where $\{z_g\}_{g\in G}$ are independent \textit{complex} Gaussian variables such that the real and imaginary parts of each $z_g$ are independent standard Gaussian variables.
The following lemma establishes that we only lose constants by studying this random matrix:

\begin{lemma}
\label{lem.real vs complex}
Suppose $G$ is a finite group.
Then $\mathbb{E}\|X_G\|\leq \mathbb{E}\|Z_G\|\leq 2\mathbb{E}\|X_G\|$.
\end{lemma}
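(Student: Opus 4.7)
The plan is to decompose $Z_G$ into its real and imaginary parts and handle the two inequalities separately by elementary means. Writing $z_g = u_g + i v_g$, where $\{u_g\}_{g \in G}$ and $\{v_g\}_{g \in G}$ are jointly independent standard real Gaussians, we obtain $Z_G = U + iV$ with
\[
U := \sum_{g \in G} u_g \rho(g), \qquad V := \sum_{g \in G} v_g \rho(g).
\]
Both $U$ and $V$ are independent and each is distributed exactly as $X_G$.

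For the upper bound, I would apply the triangle inequality $\|Z_G\| \leq \|U\| + \|V\|$ pointwise, then take expectations to get $\mathbb{E}\|Z_G\| \leq \mathbb{E}\|U\| + \mathbb{E}\|V\| = 2\, \mathbb{E}\|X_G\|$.

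For the lower bound, I would condition on $V$ and invoke Jensen's inequality for the convex function $M \mapsto \|M\|$ applied to the random variable $U$. Since $\mathbb{E}_U[U + iV] = iV$, this yields $\mathbb{E}_U \|U + iV\| \geq \|iV\| = \|V\|$ almost surely; integrating over $V$ gives $\mathbb{E}\|Z_G\| \geq \mathbb{E}\|V\| = \mathbb{E}\|X_G\|$.

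There is no real obstacle here: both directions are one-line consequences of, respectively, the triangle inequality and Jensen's inequality, combined with the observation that the real and imaginary parts of $Z_G$ are independent copies of $X_G$. The only thing to be careful about is matching the normalization convention for complex Gaussians so that $\operatorname{Re}(z_g)$ and $\operatorname{Im}(z_g)$ are genuine standard real Gaussians, which is exactly how $z_g$ was defined in the paragraph preceding the lemma.
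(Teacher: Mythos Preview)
Your argument is correct. The upper bound via the triangle inequality is exactly what the paper does. For the lower bound, however, you and the paper take different routes. The paper exploits the fact that each $\rho(g)$ is a real (permutation) matrix: writing $A=\operatorname{Re}Z_G$ and $B=\operatorname{Im}Z_G$, for any real unit vector $u$ one has $\|Z_Gu\|^2=\|Au\|^2+\|Bu\|^2\geq\|Au\|^2$, which yields the \emph{pointwise} bound $\|Z_G\|\geq\|A\|$ before taking expectations. Your Jensen argument, by contrast, only produces the inequality in expectation, but it has the advantage of not using the realness of the coefficient matrices at all---it would work just as well for any Gaussian series with mean-zero real part. So the paper's approach is slightly sharper (almost-sure rather than mean), while yours is slightly more general; for the purposes of this lemma either is perfectly adequate.
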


\begin{proof}
Observe that $A:=\operatorname{Re}Z_G$ and $B:=\operatorname{Im}Z_G$ each have the same distribution as $X_G$.
For every $u\in\mathbb{R}^n$, it holds that $\|Z_Gu\|^2=\|Au\|^2+\|Bu\|^2\geq \|Au\|^2$, and so
\[
\|Z_G\|
=\sup_{\substack{u\in\mathbb{C}^n\\\|u\|=1}}\|Z_Gu\|
\geq\sup_{\substack{u\in\mathbb{R}^n\\\|u\|=1}}\|Z_Gu\|
\geq\sup_{\substack{u\in\mathbb{R}^n\\\|u\|=1}}\|Au\|
=\|A\|.
\]
Taking the expectation then gives the first inequality.
The second inequality follows from taking the expectation of $\|Z_G\|=\|A+\mathrm{i}B\|\leq\|A\|+\|B\|$.
\end{proof}

Given a finite group $G$, let $\Sigma$ denote the set of isomorphism classes of irreducible representations of $G$, and let $d_\pi$ denote the degree of $\pi\in\Sigma$.

\begin{lemma}
\label{lem.diagonalization}
Suppose $G$ is a finite group of order $n$.
There exists a deterministic $U\in\operatorname{U}(n)$ such that the complex Gaussian Cayley matrix $Z_G$ has the same distribution as the random block-diagonalized matrix
\[
U\bigg[\bigoplus_{\pi\in\Sigma} (I_{d_\pi}\otimes \sqrt{\tfrac{n}{d_\pi}}Z_\pi)\bigg]U^*,
\]
where $\{Z_\pi\}_{\pi\in\Sigma}$ denote independent random matrices with $Z_\pi\in\mathbb{C}^{d_\pi\times d_\pi}$ having complex Gaussian entries whose real and imaginary parts are independent standard Gaussian variables.
\end{lemma}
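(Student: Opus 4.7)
The statement is essentially a probabilistic repackaging of two classical facts: the decomposition of the regular representation into isotypic components, and the Schur orthogonality relations.

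First, I would invoke the standard result from representation theory that the left regular representation $\rho$ decomposes as $\rho \cong \bigoplus_{\pi\in\Sigma} d_\pi \cdot \pi$, i.e., each irreducible $\pi$ appears with multiplicity equal to its degree. Concretely, this yields a deterministic unitary change of basis $U \in \operatorname{U}(n)$ (coming from an orthonormal basis adapted to the isotypic components) such that
\[
U^* \rho(g) U = \bigoplus_{\pi\in\Sigma} \bigl( I_{d_\pi} \otimes \pi(g) \bigr) \qquad \text{for every } g \in G,
\]
after choosing the irreducibles to be unitary representations. Substituting this identity into the definition of $Z_G$ and exchanging the linear-combination-over-$g$ with the direct sum over $\pi$ gives
\[
U^* Z_G U = \bigoplus_{\pi\in\Sigma} \bigl( I_{d_\pi} \otimes Y_\pi \bigr), \qquad Y_\pi := \sum_{g\in G} z_g \pi(g).
\]
This reduces the lemma to identifying the joint distribution of the matrices $\{Y_\pi\}_{\pi\in\Sigma}$.

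Next, I would argue that $\{Y_\pi\}_{\pi\in\Sigma}$ is a collection of independent complex Gaussian matrices of the desired form. Since each $Y_\pi$ is a linear combination of the $z_g$'s with deterministic coefficients, the family $\{Y_\pi\}$ is jointly complex Gaussian, and the proper circular structure (i.e.\ $\mathbb{E}[(Y_\pi)_{ij} (Y_{\pi'})_{kl}] = 0$) follows from $\mathbb{E}[z_g^2]=0$. The entrywise covariances are controlled by the Schur orthogonality relations
\[
\sum_{g\in G} \pi(g)_{ij} \,\overline{\pi'(g)_{kl}} = \frac{n}{d_\pi}\, \delta_{\pi,\pi'}\, \delta_{i,k}\, \delta_{j,l},
\]
which, combined with $\mathbb{E}[z_g \overline{z_h}] = 2\delta_{g,h}$, yield
\[
\mathbb{E}\bigl[(Y_\pi)_{ij}\, \overline{(Y_{\pi'})_{kl}}\bigr] = \frac{2n}{d_\pi}\,\delta_{\pi,\pi'}\,\delta_{i,k}\,\delta_{j,l}.
\]
Thus entries across distinct $Y_\pi$ are uncorrelated (hence independent, by joint Gaussianity), entries within each $Y_\pi$ are independent, and each entry of $Y_\pi$ is a centered complex Gaussian with $\mathbb{E}|\cdot|^2 = 2n/d_\pi$. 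This matches exactly the distribution of $\sqrt{n/d_\pi}\,Z_\pi$ for an independent complex Ginibre $Z_\pi$, as claimed.

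The only genuine subtlety is the construction of the deterministic unitary $U$: one must carefully assemble an orthonormal basis of $\mathbb{C}^G$ from the matrix entries of the irreducibles (the ``Peter--Weyl basis''), and verify that in this basis the operator $\rho(g)$ assumes the block form $\bigoplus_\pi (I_{d_\pi}\otimes \pi(g))$ rather than some other arrangement such as $\bigoplus_\pi (\pi(g) \otimes I_{d_\pi})$. Once the isotypic decomposition is set up correctly and Schur orthogonality is applied, the rest of the argument is a routine Gaussian covariance computation.
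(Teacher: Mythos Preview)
Your proposal is correct and follows essentially the same approach as the paper: both invoke the Peter--Weyl decomposition of the regular representation to block-diagonalize $\rho(g)$, then use Schur orthogonality to identify the distribution of the blocks. The only cosmetic difference is that the paper packages the second step as ``the change of coordinates $\{a_g\}\mapsto\{\sqrt{d_\pi/n}\sum_g a_g\pi(g)_{ij}\}$ is unitary, hence by rotation invariance of the complex Gaussian the image has i.i.d.\ entries,'' whereas you verify the same thing by direct covariance computation together with $\mathbb{E}[z_g^2]=0$; these are equivalent formulations of the same argument.
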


\begin{proof}
For each $\pi\in\Sigma$, select $\lambda_\pi\in\pi$ such that $\lambda_\pi\colon G\to\operatorname{U}(d_\pi)$.
By Peter--Weyl, there exists $U\in\operatorname{U}(n)$ such that
\[
\rho(g)=U\bigg[\bigoplus_{\pi\in\Sigma} (I_{d_\pi}\otimes \lambda_\pi(g))\bigg]U^*,
\qquad
g\in G.
\]
By Schur's orthogonality relations, the linear map defined by
\[
\{a_g\}_{g\in G}
\mapsto
\bigg\{\sqrt{\tfrac{d_\pi}{n}}\sum_{g\in G}a_g\lambda_\pi(g)_{ij}\bigg\}_{\pi\in\Sigma,i,j\in[d_\pi]}
\]
is unitary.
It follows from the rotation invariance of the spherical Gaussian that the tuple $\{\sqrt{d_\pi/n}\sum_{g\in G}z_g\lambda_\pi(g)\}_{\pi\in\Sigma}$ of random matrices has the same distribution as $\{Z_\pi\}_{\pi\in\Sigma}$.
\end{proof}

\begin{lemma}
\label{lem.bound for pseudo-quasirandom}
Suppose $G$ is a finite group of order $n$.
Then
\[
\mathbb{E}\|Z_G\|
\lesssim \sqrt{n}\cdot m(G),
\qquad
m(G):=\inf_{s\geq0}\bigg( s + \sum_{\pi \in \Sigma}\frac{1}{\sqrt{d_\pi}}e^{-d_\pi s^2/2} \bigg).
\]
Furthermore, $1\lesssim m(G) \lesssim \sqrt{\log n}$.
\end{lemma}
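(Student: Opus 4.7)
The plan is to use Lemma~\ref{lem.diagonalization} to decompose $\|Z_G\|$ into a maximum over independent complex Ginibre blocks, apply Gaussian concentration to each block, and then union-bound and integrate the tails to extract exactly the expression $m(G)$.

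First, by Lemma~\ref{lem.diagonalization}, unitary invariance of the spectral norm, and the identity $\|I_{d_\pi}\otimes M\|=\|M\|$, the random variable $\|Z_G\|$ has the same distribution as $\max_{\pi\in\Sigma}\sqrt{n/d_\pi}\,\|Z_\pi\|$. Next, each $Z_\pi$ is a $d_\pi\times d_\pi$ complex Ginibre matrix, so standard estimates give $\mathbb{E}\|Z_\pi\|\leq C\sqrt{d_\pi}$, and because $Z\mapsto\|Z\|$ is $1$-Lipschitz in the Frobenius norm, Gaussian concentration yields
\[
\mathbb{P}\bigl(\|Z_\pi\|\geq C\sqrt{d_\pi}+t\bigr)\leq e^{-t^2/2},\qquad t\geq 0.
\]
Setting $M_\pi:=\sqrt{n/d_\pi}\,\|Z_\pi\|$ and rescaling converts this into $\mathbb{P}(M_\pi\geq C\sqrt{n}+\sqrt{n}\,v)\leq e^{-d_\pi v^2/2}$, which produces the exact exponent appearing in $m(G)$.

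I would then integrate the tail: for any cutoff $s\geq 0$,
\[
\mathbb{E}\max_{\pi}M_\pi\leq C\sqrt{n}+\sqrt{n}\int_{0}^{\infty}\min\Bigl(1,\textstyle\sum_{\pi}e^{-d_\pi v^2/2}\Bigr)\,dv\leq C\sqrt{n}+\sqrt{n}\Bigl(s+\textstyle\sum_{\pi}\int_{s}^{\infty}e^{-d_\pi v^2/2}\,dv\Bigr),
\]
and bound each Gaussian tail via $\int_s^\infty e^{-d_\pi v^2/2}\,dv\lesssim\frac{1}{\sqrt{d_\pi}}\,e^{-d_\pi s^2/2}$ (splitting into the regimes $s\sqrt{d_\pi}\geq 1$, where Mills' ratio applies, and $s\sqrt{d_\pi}<1$, where $e^{-d_\pi s^2/2}$ is bounded away from $0$). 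Taking the infimum over $s\geq 0$ and absorbing the additive $C\sqrt{n}$ using the forthcoming lower bound $m(G)\gtrsim 1$ gives $\mathbb{E}\|Z_G\|\lesssim\sqrt{n}\,m(G)$.

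The two-sided bounds on $m(G)$ are then quick. Restricting the sum to the trivial representation ($d_\pi=1$) gives $m(G)\geq\inf_{s\geq 0}(s+e^{-s^2/2})=1$, since $s+e^{-s^2/2}$ is increasing on $[0,\infty)$ (its derivative $1-se^{-s^2/2}$ is positive because $se^{-s^2/2}$ is maximized at $s=1$ with value $e^{-1/2}<1$). For the upper bound, choosing $s=\sqrt{2\log n}$ makes $e^{-d_\pi s^2/2}=n^{-d_\pi}\leq n^{-1}$ for every $\pi$, and since $|\Sigma|$ equals the number of conjugacy classes of $G$ (hence at most $n$), the sum is bounded by $1$, yielding $m(G)\lesssim\sqrt{\log n}$. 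The main technical obstacle is simply bookkeeping: ensuring that the Gaussian tail integration produces exactly the $\frac{1}{\sqrt{d_\pi}}e^{-d_\pi s^2/2}$ factor defining $m(G)$, rather than a nearby but distinct expression. The substantive probabilistic input is just the standard sub-Gaussian concentration for Ginibre operator norms.
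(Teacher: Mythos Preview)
Your proposal is correct and follows essentially the same approach as the paper: block-diagonalize via Lemma~\ref{lem.diagonalization}, combine Gordon's bound $\mathbb{E}\|Z_\pi\|\lesssim\sqrt{d_\pi}$ with Gaussian concentration for the Lipschitz map $Z\mapsto\|Z\|$, union-bound and integrate the tails past a cutoff $s$, and optimize over $s$. Your treatment of the side bounds on $m(G)$ is also the same, with the minor variation that you obtain $m(G)\geq 1$ by a monotonicity argument where the paper cases on $s\leq 1$ to get $m(G)\geq e^{-1/2}$; both are equivalent for the purposes of $\gtrsim 1$.
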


\begin{proof}
First, Lemma~\ref{lem.diagonalization} gives
\begin{equation}
\label{eq.main bound 1}
\tfrac{1}{\sqrt{n}}\mathbb{E}\|Z_G\|
=\mathbb{E}\max_{\pi\in\Sigma}\|\tfrac{1}{\sqrt{d_\pi}}Z_\pi\|
=\int_0^\infty\mathbb{P}\bigg\{\max_{\pi\in\Sigma}\|\tfrac{1}{\sqrt{d_\pi}}Z_\pi\|>t\bigg\}dt.
\end{equation}
By Gordon's theorem~\cite{Vershynin:12}, there is a universal constant $C>0$ such that $\mathbb{E}\|Z_\pi\|\leq C\sqrt{d_\pi}$ for all $\pi$.
Fix $s\geq0$ to be selected later.
We truncate the integral in \eqref{eq.main bound 1} at $C+s$ to obtain
\begin{equation}
\label{eq.main bound 2}
\tfrac{1}{\sqrt{n}}\mathbb{E}\|Z_G\|
\leq C+s+\int_{C+s}^\infty\mathbb{P}\bigg\{\max_{\pi\in\Sigma}\|\tfrac{1}{\sqrt{d_\pi}}Z_\pi\|>t\bigg\}dt
=:C+s+I.
\end{equation}
Next, we apply the union bound to the integrand in \eqref{eq.main bound 2}:
\begin{equation}
\label{eq.main bound 3}
I
\leq \int_{C+s}^\infty\sum_{\pi\in\Sigma}\mathbb{P}\bigg\{\|\tfrac{1}{\sqrt{d_\pi}}Z_\pi\|>t\bigg\}dt
=\sum_{\pi\in\Sigma}\int_s^\infty\mathbb{P}\bigg\{\|Z_\pi\|-C\sqrt{d_\pi}>t\sqrt{d_\pi}\bigg\}dt.
\end{equation}
The mapping $(\mathbb{R}^{d_\pi\times d_\pi})^2\to\mathbb{R}$ defined by $(X,Y)\mapsto\|X+\mathrm{i}Y\|$ is $1$-Lipschitz with respect to the mixed Frobenius-$2$ norm, and so Gaussian concentration gives
\[
\mathbb{P}\Big\{\|Z_\pi\|-\mathbb{E}\|Z_\pi\|\geq t\Big\}\leq e^{-t^2/2},
\qquad
t\geq0.
\]
We use this to continue our bound on \eqref{eq.main bound 3}:
\[
I
\leq\sum_{\pi\in\Sigma}\int_s^\infty\mathbb{P}\bigg\{\|Z_\pi\|-\mathbb{E}\|Z_\pi\|>t\sqrt{d_\pi}\bigg\}dt
\leq\sum_{\pi\in\Sigma}\int_s^\infty e^{-d_\pi t^2/2}dt
\leq\sqrt{2\pi}\sum_{\pi\in\Sigma}\frac{1}{\sqrt{d_\pi}}e^{-d_\pi s^2/2}.
\]
Putting everything together, we therefore have
\[
\tfrac{1}{\sqrt{n}}\mathbb{E}\|Z_G\|
\leq C+s+\sqrt{2\pi}\sum_{\pi\in\Sigma}\frac{1}{\sqrt{d_\pi}}e^{-d_\pi s^2/2}
\lesssim C+m(G),
\]
where the last step selects an optimal $s\geq0$.
It remains to show that $1\lesssim m(G)\lesssim\sqrt{\log n}$.
For the first inequality, isolating the trivial representation gives
\[
s + \sum_{\pi \in \Sigma}\frac{1}{\sqrt{d_\pi}}e^{-d_\pi s^2/2}
\geq s+e^{-s^2/2}
\geq e^{-1/2}
\]
for all $s\geq0$, where the last step follows by casing on whether $s\leq1$.
For the second inequality, the bounds $d_\pi\geq1$ and $|\Sigma|\leq n$ together give
\[
s + \sum_{\pi \in \Sigma}\frac{1}{\sqrt{d_\pi}}e^{-d_\pi s^2/2}
\leq s + ne^{-s^2/2}.
\]
The infimum of the right-hand side is at most $\sqrt{2\log n}+1$ by taking $s:=\sqrt{2\log n}$.
\end{proof}

If $G$ is an abelian group of order $n$, then $|\Sigma|=n$ and $d_\pi=1$ for every $\pi\in\Sigma$, and so $m(G)\asymp\sqrt{\log n}$, i.e., Lemma~\ref{lem.bound for pseudo-quasirandom} does not improve over the noncommutative Khintchine inequality in such cases.
Next, fix $k\in\mathbb{N}$ and $\epsilon>0$, and consider all groups $G$ such that
\[
|\{\pi\in\Sigma:d_\pi<\epsilon\log|G|\}|\leq k.
\]
Then taking $s:=\sqrt{2/\epsilon}$ reveals that $m(G)$ is bounded:
\[
m(G)
\leq s+\sum_{\pi \in \Sigma}\frac{1}{\sqrt{d_\pi}}e^{-d_\pi s^2/2}
\leq s+k+\frac{|\Sigma|}{\sqrt{\epsilon\log|G|}}\cdot|G|^{-\epsilon s^2/2}
\leq \sqrt{2/\epsilon}+k+1/\sqrt{\epsilon}.
\]
By the following lemma, this occurs whenever $G$ is a nonabelian finite simple group.

\begin{lemma}
\label{lem.repn sizes for simple nonabelian}
There exist universal constants $k\in\mathbb{N}$ and $\epsilon>0$ for which the following holds:
For every nonabelian finite simple group $G$, it holds that $|\{\pi\in\Sigma:d_\pi<\epsilon\log|G|\}|\leq k$.
\end{lemma}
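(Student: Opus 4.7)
The plan is to invoke the classification of finite simple groups (CFSG) and check the claim separately for each of the three resulting families: the alternating groups $A_n$ for $n \geq 5$, the simple groups of Lie type, and the $26$ sporadic simple groups. I will choose a universal $\epsilon > 0$ small enough to work for all three families and $k$ large enough to absorb the finitely many small-order groups that are not handled automatically. The sporadic groups are immediate: only $26$ occur, each with finitely many irreducible representations, so they contribute only a fixed amount to $k$.

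For the alternating groups, $\log|A_n| = \log(n!/2) = \Theta(n \log n)$, so I need to count irreducible representations of $A_n$ of degree below $\epsilon n \log n$. The irreducible representations of $A_n$ are inherited from those of $S_n$ indexed by partitions, with self-conjugate partitions splitting into two equal-dimensional pieces. By Rasala's theorem on minimum character degrees of $S_n$ (or the hook length formula applied directly), apart from the trivial representation and the $(n-1)$-dimensional standard representation, every irreducible representation of $A_n$ has dimension $\Omega(n^2)$. Since $n^2$ eventually dominates $\epsilon n \log n$, only these two representations fall below the cutoff once $n$ exceeds a threshold depending on $\epsilon$; the remaining small-$n$ cases are absorbed into $k$.

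For the Lie-type groups I would invoke the Landazuri--Seitz theorem (together with the refinements of Seitz, Zalesskii, and others), which gives that every nontrivial irreducible representation of a finite simple group $G$ of Lie type has dimension at least $c_1 |G|^{c_2}$ for universal constants $c_1, c_2 > 0$. Since $|G|^{c_2}$ eventually dominates $\epsilon \log|G|$ for any fixed $\epsilon$, only the trivial representation is small for all but finitely many Lie-type groups, with the exceptions again absorbed into $k$.

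The main obstacle is the alternating case: showing that the third smallest character degree of $A_n$ grows quadratically rather than linearly in $n$ is exactly what prevents $k$ from growing with $n$. The elementary bound $d_{\min}(A_n) = n-1$ is insufficient, since $n - 1$ is already much smaller than $\epsilon \log|A_n|$, so the argument must invoke finer information about the second-smallest nontrivial character degree of $A_n$, via either the hook length formula or Rasala's classification. The Lie-type case is comparatively routine once Landazuri--Seitz is cited, and the sporadic and small-order exceptional cases are purely mechanical.
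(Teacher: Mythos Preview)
Your overall plan is the same as the paper's: invoke the classification of finite simple groups and handle the alternating, Lie-type, and sporadic families separately. Your treatment of the alternating groups via Rasala's bound matches the paper's use of James's result (both give that the third-smallest character degree of $A_n$ is $\Omega(n^2)$), and absorbing the sporadics and small-order cases into $k$ is exactly what the paper does.

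There is, however, a genuine error in your Lie-type step. You claim that Landazuri--Seitz yields a uniform lower bound $d_\pi \geq c_1|G|^{c_2}$ for every nontrivial irreducible $\pi$, with $c_1,c_2>0$ universal across all groups of Lie type. This is false. For $\mathrm{PSL}_n(q)$ one has $|G|\asymp q^{n^2-1}$ while the minimal nontrivial degree is only of order $q^{n-1}$, so the exponent $c_2$ would have to be roughly $1/(n+1)$, which tends to zero as the rank grows. What Landazuri--Seitz and its refinements actually provide are bounds of the shape $d_\pi \geq c\,q^{f(\text{rank})}$ for each Lie family separately, and one must then verify directly that this quantity dominates $\epsilon\log|G|$. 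For the classical families this reduces to checking $q^n \gtrsim n^2\log q$; for the exceptional families the rank is bounded, so $\log|G|\asymp\log q$ and a minimal degree $\gtrsim q$ already suffices. The paper carries out precisely this computation using Collins's tables. Your conclusion survives, but the shortcut through a uniform power-of-$|G|$ bound does not exist and must be replaced by this family-by-family comparison.
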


\begin{proof}
We argue in cases using the classification of finite simple groups~\cite{Aschbacher:04}.

First, we treat the alternating groups.
For all sufficiently large $n$, it holds that $S_n$ has four irreducible representations of degree smaller than $n(n-3)/2$, namely, the trivial representation of degree $1$ (corresponding to the partition $(n)$), the standard representation of degree $n-1$ (corresponding to the partition $(n-1,1)$), and their conjugates (corresponding to the transposed Young diagrams); see~\cite{James:83}, for example.
By Proposition~5.1 in~\cite{FultonH:04}, the restrictions of these representations to $A_n$ give the only two irreducible representations (up to isomorphism) of degree smaller than $n(n-3)/2$.
Since $n(n-3)/2\geq\log|A_n|$, the claim holds for all sufficiently large alternating groups.

Next, we treat the classical groups $A_n(q)$, $B_n(q)$, $C_n(q)$, $D_n(q)$, $^2A_n(q^2)$, and $^2D_n(q^2)$.
Letting $G$ denote any of these groups, it holds that $\log|G|\leq Cn^2\log q$, where $C>0$ is a universal constant.
The third table in \cite{Collins:08} gives that every nontrivial representation of each of these groups has degree at least $cq^n$, where $c>0$ is a universal constant.
One may verify that $q^n\geq n^2\log q$ for all $n,q\in\mathbb{N}$, and so the claim holds for all classical groups.

Next, consider the exceptional groups of Lie type, namely, $E_6(q)$, $E_7(q)$, $E_8(q)$, $F_4(q)$, $G_2(q)$, $^2E_6(q^2)$, $^3D_4(q^3)$, $^2B_2(q)$, $^2F_4(q)$, and $^2G_2(q)$.
Letting $G$ denote any of these groups, it holds that $\log|G|\leq C\log q$, where $C>0$ is a universal constant.
The fourth table in \cite{Collins:08} gives that every nontrivial representation of each of these groups has degree at least $q$, and so the claim holds for all exceptional groups of Lie type.

Finitely many finite simple groups remain, and so we are done.
\end{proof}

We note that the alternating groups prevent us from taking $k=1$ in Lemma~\ref{lem.repn sizes for simple nonabelian}.
This was observed by Gowers in his study of quasirandom groups~\cite{Gowers:08}, that is, sequences of groups for which the minimum degree of the nontrivial representations is unbounded.
In~\cite{Gowers:08}, Gowers presents an elementary proof that every nontrivial representation of every nonabelian finite simple group $G$ has degree at least $\frac{1}{2}\sqrt{\log|G|}$, and it would be nice to obtain a similarly elementary proof of Lemma~\ref{lem.repn sizes for simple nonabelian}.
We are now ready to prove our main result.

\begin{proof}[Proof of Theorem~\ref{thm.main}]
The inequalities follow from combining Proposition~\ref{prop.nck} with Lemmas~\ref{lem.real vs complex} and~\ref{lem.bound for pseudo-quasirandom}.
For (a), suppose $G$ is abelian.
Then by Lemma~\ref{lem.diagonalization}, $\|Z_G\|$ is $\sqrt{n}$ times the maximum of the absolute values of $n$ independent standard complex Gaussian variables.
The expectation of this maximum is $\Theta(\sqrt{\log n})$; see for example Section~3.3 in~\cite{LedouxT:91}.
Combining with Lemma~\ref{lem.real vs complex} then gives $\|X_G\|\asymp\|Z_G\|\asymp\sqrt{n\log n}$.
For (b), suppose $G$ is simple and nonabelian.
Then Lemmas \ref{lem.bound for pseudo-quasirandom} and~\ref{lem.repn sizes for simple nonabelian} together imply that $m(G)\asymp1$.
\end{proof}

\section{An application to the matrix Spencer conjecture}

Spencer's theorem~\cite{Spencer:85} is a celebrated result in discrepancy theory:

\begin{proposition}
\label{prop.spencer}
For every $a_1,\ldots,a_n\in\mathbb{R}^d$ such that $\|a_i\|_\infty\leq1$ for all $i\in\{1,\ldots,n\}$, there exist $\epsilon_1,\ldots,\epsilon_n\in\{-1,1\}$ such that
\[
\bigg\|\sum_{i=1}^n\epsilon_ia_i\bigg\|_\infty
\lesssim \sqrt{n}\cdot\max\Big\{1,\sqrt{\log(d/n)}\Big\}.
\]
\end{proposition}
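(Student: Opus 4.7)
The plan is to follow Spencer's original entropy/pigeonhole approach, reducing the theorem to an iterated \emph{partial coloring lemma}. The lemma asserts: given $a_1,\ldots,a_n\in\mathbb{R}^d$ with $\|a_i\|_\infty\leq 1$, there exists $\chi\in\{-1,0,1\}^n$ with at least $n/2$ nonzero entries such that
\[
\bigg\|\sum_{i=1}^n \chi_i a_i\bigg\|_\infty \lesssim \sqrt{n}\cdot\max\bigl\{1,\sqrt{\log(d/n)}\bigr\}.
\]
Given this, Proposition~\ref{prop.spencer} follows by iteration: apply the lemma to produce a partial coloring, freeze the signs that have been assigned, and recurse on the indices $i$ with $\chi_i=0$ (which form a strictly smaller instance of the same problem with the same $\ell_\infty$ bound on the $a_i$). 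At level $k$ there are at most $n_k:=n/2^k$ active indices, so the level-$k$ contribution to the discrepancy is at most $\sqrt{n_k}\cdot\max\{1,\sqrt{\log(d/n_k)}\}$, and the geometric decay of the $\sqrt{n_k}$ factors makes this series sum to the claimed bound in both regimes $d\geq n$ and $d<n$.

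To prove the partial coloring lemma I would use a pigeonhole argument on the $2^n$ sign vectors. Set $b := C\sqrt{n}\cdot\max\{1,\sqrt{\log(d/n)}\}$ for a sufficiently large constant $C$, and for each $\epsilon\in\{-1,1\}^n$ form the \emph{bucket signature} $\phi(\epsilon)\in\mathbb{Z}^d$ whose $j$-th entry is the integer nearest to $\bigl(\sum_i\epsilon_i a_i\bigr)_j/b$. Each coordinate $\bigl(\sum_i\epsilon_i a_i\bigr)_j$ is a sum of $n$ independent bounded mean-zero Rademacher-scaled terms and is therefore subgaussian with variance proxy at most $n$, giving tail probability at most $e^{-b^2/(2n)}$ beyond any multiple of $b$. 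A careful entropy estimate on the pushforward distribution of $\phi$ under uniform $\epsilon$ then bounds the number of occupied signatures by $2^n\cdot e^{-cn}$, provided the choice of $b$ forces $d\cdot e^{-b^2/(2n)}\lesssim n$; our choice of $b$ does exactly that. Pigeonhole produces $\epsilon\neq\epsilon'$ with $\phi(\epsilon)=\phi(\epsilon')$, so that $\chi:=(\epsilon-\epsilon')/2\in\{-1,0,1\}^n$ is a partial coloring with $\|\sum_i\chi_i a_i\|_\infty\lesssim b$, and a rough count of the total number of buckets forces $\chi$ to have at least $n/2$ nonzero entries.

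The main technical hurdle is the entropy estimate needed to count occupied buckets, since a naive union bound over $\prod_j(2+2\sqrt{n}/b)$ is too wasteful and loses the $\sqrt{\log(d/n)}$ improvement over the trivial $\sqrt{n\log d}$. The correct accounting weighs each coordinate by its subgaussian tail probability, which is where the Gaussian-like factor $e^{-b^2/(2n)}$ enters; implementing this rigorously, either through Spencer's weighted pigeonhole or through a moment-generating-function calculation for the coordinates of $\phi$, is the only delicate point. Once the partial coloring lemma is established, the iteration and the telescoping summation are routine.
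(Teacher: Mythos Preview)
The paper does not prove Proposition~\ref{prop.spencer} at all: it is stated as Spencer's theorem with a citation to~\cite{Spencer:85} and then invoked as a black box in the proof of Theorem~\ref{thm.matrix spencer group case}. So there is no ``paper's own proof'' to compare against.

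Your sketch is a faithful outline of the classical proof (Spencer's original pigeonhole argument, later recast in entropy language by Beck and others), and the high-level structure---partial coloring via bucketed pigeonhole, then iteration with geometric decay---is correct. Two places deserve a little more care than you give them. First, the sentence ``a rough count of the total number of buckets forces $\chi$ to have at least $n/2$ nonzero entries'' is hiding a nontrivial step: pigeonhole alone only gives \emph{some} pair $\epsilon\neq\epsilon'$ in a common bucket, not a pair that differ in $\Omega(n)$ coordinates. One needs the entropy bound to be strong enough that some bucket contains at least $2^{cn}$ sign vectors, and then a Kleitman-type diameter argument (or the Sauer--Shelah/Vapnik--Chervonenkis style counting) to extract two vectors at Hamming distance $\Omega(n)$. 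Second, your entropy step is stated as a bound on the \emph{number of occupied signatures}, but the standard route is instead to bound the Shannon entropy $H(\phi(\epsilon))\leq\sum_j H(\phi(\epsilon)_j)$ coordinatewise and then use $H(\epsilon\mid\phi(\epsilon))=n-H(\phi(\epsilon))$ to find a single heavy bucket; the per-coordinate entropy bound is exactly where the subgaussian tail and the choice of $b$ enter. You clearly know this is the crux, so these are refinements rather than gaps.
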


The \textit{matrix Spencer conjecture}~\cite{Zouzias:12,Meka:14} is a noncommutative analogue of Proposition~\ref{prop.spencer}:

\begin{conjecture}
For every $A_1,\ldots,A_n\in\mathbb{R}^{d\times d}$ such that $\|A_i\|\leq1$ for all $i\in\{1,\ldots,n\}$, there exist $\epsilon_1,\ldots,\epsilon_n\in\{-1,1\}$ such that
\[
\bigg\|\sum_{i=1}^n\epsilon_iA_i\bigg\|
\lesssim \sqrt{n}\cdot\max\Big\{1,\sqrt{\log(d/n)}\Big\}.
\]
\end{conjecture}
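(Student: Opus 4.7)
The plan is to follow the now-standard two-layer reduction of discrepancy to a Gaussian matrix series bound. The outer layer is iterated partial coloring: one first proves the lemma that for every $A_1,\ldots,A_n\in\mathbb{R}^{d\times d}$ with $\|A_i\|\le 1$, there exists $x\in[-1,1]^n$ with at least $n/2$ coordinates saturated in $\{-1,1\}$ and $\|\sum_i x_iA_i\|\lesssim \sqrt{n}\cdot\max\{1,\sqrt{\log(d/n)}\}$. Applying this lemma recursively to the unsaturated coordinates halves the remaining problem at each stage, and the resulting discrepancies sum geometrically to the conjectured bound, with the $\max\{1,\sqrt{\log(d/n)}\}$ form emerging once iteration reaches the regime in which the dimension dominates the number of remaining uncolored variables. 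This outer reduction is essentially the same bookkeeping that yields Proposition~\ref{prop.spencer} from its own partial coloring lemma.

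For the partial coloring lemma I would run the Lovett--Meka / Bansal Brownian-motion-in-the-cube construction: start a Brownian motion $B(t)\in\mathbb{R}^n$ at the origin, reflect it whenever $\|\sum_i B_i(t)A_i\|$ reaches a chosen threshold $t_0$, and stop the first time $n/2$ coordinates pin against the boundary of the cube. Standard entropy and hitting-time calculations show that this succeeds with positive probability provided
\[
\mathbb{E}\sup_{s\in[0,T]}\bigg\|\sum_{i=1}^n B_i(s)A_i\bigg\|\;\ll\; t_0
\]
for $T=\Theta(n)$, and Doob's inequality together with Gaussian concentration reduce the left-hand side, up to a universal constant, to $\mathbb{E}\|\sum_i g_iA_i\|$ with independent standard Gaussians $g_i$. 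So everything reduces to proving
\[
\mathbb{E}\bigg\|\sum_{i=1}^n g_iA_i\bigg\|\;\lesssim\; \sqrt{n}\cdot\max\{1,\sqrt{\log(d/n)}\}
\]
for every tuple satisfying $\|A_i\|\le 1$.

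The main obstacle is exactly the gap that motivates this paper. Since $\|\sum_i A_i^2\|\le n$ and likewise for $\sum_i A_i^*A_i$, the noncommutative Khintchine inequality (Proposition~\ref{prop.nck}), after dilation, gives only $\sqrt{n\log d}$, which exceeds the target by a factor of up to $\sqrt{\log d}$. Closing this gap requires an improved Khintchine bound that saves the extra $\sqrt{\log d}$ whenever the tuple $\{A_i\}$ is \emph{sufficiently noncommutative} in some yet-to-be-identified quantitative sense. Theorem~\ref{thm.main}, combined with the analyses of Propositions~\ref{prop.tropp} and~\ref{prop.BBvH}, shows that the existing refinements of Tropp and of Bandeira--Boedihardjo--van Handel do not accomplish this even on the structured family of Gaussian Cayley matrices; a successful proposal would need a genuinely new inequality whose tightness examples include both the diagonal commutative case (saturating the $\log d$ loss) and the Gaussian Cayley matrices of nonabelian finite simple groups (saturating the $O(1)$ improvement guaranteed by part~(b) of Theorem~\ref{thm.main}). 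Formulating such an inequality, and then certifying that worst-case tuples $\{A_i\}$ with $\|A_i\|\le 1$ carry enough noncommutativity in the required sense whenever $d\gg n$, is in my view the principal reason this statement is currently a conjecture rather than a theorem; any attempt along the lines above ultimately halts at this step.
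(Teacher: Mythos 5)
You have correctly recognized that this statement is a \emph{conjecture} in the paper, not a theorem: the paper does not supply a proof, so there is no ``paper's own proof'' to compare against. Your diagnosis of the obstruction---that the naive noncommutative Khintchine bound of Proposition~\ref{prop.nck} overshoots by a factor of up to $\sqrt{\log d}$, that the refinements in Propositions~\ref{prop.tropp} and~\ref{prop.BBvH} do not close this gap even on Gaussian Cayley matrices, and that a new quantitative notion of noncommutativity is needed---is exactly the perspective the paper develops in Sections~2 and~3, and Theorem~\ref{thm.main} is precisely the paper's evidence for why the gap is real. What the paper does prove is Theorem~\ref{thm.matrix spencer group case}, the special case in which $\{A_i\}_{i=1}^n = \{\rho(g)\}_{g\in G}$ for $G$ abelian or simple: the abelian case reduces to classical Spencer (Proposition~\ref{prop.spencer}) after diagonalizing by characters, and the simple nonabelian case uses a comparison between Rademacher and Gaussian averages together with Theorem~\ref{thm.main}(b). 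It would strengthen your write-up to note this special case explicitly, since it shows the Gaussian-series route you propose does succeed when one has enough structure to beat Khintchine.

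One caveat on your outer-layer reduction: the step from a partial-coloring lemma to the full statement, and especially the claim that the Lovett--Meka/Bansal random-walk construction reduces cleanly (via Doob and Gaussian concentration) to bounding $\mathbb{E}\|\sum_i g_i A_i\|$, is more delicate in the matrix setting than your sketch suggests. Controlling the entropy of the feasible set under a spectral-norm constraint, rather than finitely many scalar constraints, is the technical crux of the cited works~\cite{DadushJR:22,HopkinsRS:22,BansalJM:22}, and even the Gaussian analogue of matrix Spencer, $\mathbb{E}\|\sum_i g_i A_i\| \lesssim \sqrt{n}\max\{1,\sqrt{\log(d/n)}\}$ for $\|A_i\|\le 1$, remains open in general. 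So the reduction is not a self-contained ``standard bookkeeping'' step; it is a second open front, though it is reasonable to view the Khintchine gap you identify as the principal one.
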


Various cases of the matrix Spencer conjecture have been treated in~\cite{LevyRR:17,DadushJR:22,HopkinsRS:22,BansalJM:22}.
In what follows, we leverage Theorem~\ref{thm.main} to resolve yet another special case:

\begin{theorem}
\label{thm.matrix spencer group case}
For every abelian or simple group $G$ of order $n$, there exists $\epsilon_g\in\{-1,1\}$ for each $g\in G$ such that
\[
\bigg\|\sum_{g\in G}\epsilon_g\rho(g)\bigg\|
\lesssim \sqrt{n}.
\]
\end{theorem}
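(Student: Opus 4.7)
The plan is to dispatch the two cases separately, since abelian and simple nonabelian groups satisfy very different Gaussian concentration bounds.

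\textbf{Simple nonabelian case.} Here Theorem~\ref{thm.main}(b) already gives $\mathbb{E}\|X_G\| \lesssim \sqrt{n}$ for the Gaussian matrix series $X_G = \sum_g x_g \rho(g)$. I would pass to Rademacher signs by the standard symmetrization: writing $x_g = \epsilon_g |x_g|$ with $\epsilon_g$ a Rademacher sign independent of $|x_g|$, conditioning on the signs, and applying Jensen's inequality to the convex norm $\|\cdot\|$ yields
\[
\mathbb{E}_{\epsilon}\Big\|\sum_g \epsilon_g \rho(g)\Big\| \leq \sqrt{\pi/2}\cdot \mathbb{E}\|X_G\| \lesssim \sqrt{n},
\]
so some specific choice of signs attains this bound.

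\textbf{Abelian case.} This route fails since $\mathbb{E}\|X_G\| \asymp \sqrt{n\log n}$ by Theorem~\ref{thm.main}(a), so I would instead exploit simultaneous diagonalization. For abelian $G$, all $n$ irreducibles are one-dimensional characters $\chi : G \to \mathbb{C}^*$, and the regular representation decomposes as $\rho \cong \bigoplus_\chi \chi$. Any signed sum is therefore diagonalizable with
\[
\Big\|\sum_g \epsilon_g \rho(g)\Big\| = \max_\chi \Big|\sum_g \epsilon_g \chi(g)\Big|,
\]
reducing the task to a character-sum discrepancy question amenable to classical Spencer (Proposition~\ref{prop.spencer}). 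For each $g \in G$, I would form $a_g \in \mathbb{R}^{2n}$ by listing $\operatorname{Re}\chi(g)$ and $\operatorname{Im}\chi(g)$ as $\chi$ ranges over the $n$ characters; since $|\chi(g)| = 1$ we have $\|a_g\|_\infty \leq 1$, so Spencer produces signs with $\|\sum_g \epsilon_g a_g\|_\infty \lesssim \sqrt{n}\cdot\max\{1,\sqrt{\log 2}\} \lesssim \sqrt{n}$, bounding the real and imaginary parts (and hence the modulus) of each character sum by $O(\sqrt{n})$.

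The main obstacle is conceptual rather than technical: one must recognize that the abelian and simple nonabelian regimes demand entirely different tools (classical Spencer for the former, Gaussian concentration from Theorem~\ref{thm.main}(b) combined with Gaussian-to-Rademacher symmetrization for the latter), rather than trying to route both through the same Gaussian bound. Once framed this way, each half is short; the only minor nuisance is splitting complex characters into real and imaginary parts so that Spencer's real-vector formulation applies directly without inflating the dimension-to-sample ratio past a constant.
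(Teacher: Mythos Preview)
Your proposal is correct and matches the paper's proof essentially line for line: the paper also splits into the abelian case (simultaneous diagonalization by characters, stack real and imaginary parts into vectors in $\mathbb{R}^{2n}$, apply Spencer's theorem) and the simple nonabelian case (write $x_g=r_g|x_g|$, apply Jensen to pull the expectation inside the norm, then invoke Theorem~\ref{thm.main}(b)). Even the constant $\sqrt{\pi/2}$ and the observation that $d/n=2$ keeps the Spencer bound at $O(\sqrt{n})$ appear identically.
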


\begin{proof}
We case on whether $G$ is abelian or simple and nonabelian.

\medskip

\noindent
\textbf{Case I:} $G$ is abelian.
Then there exists $U\in\operatorname{U}(n)$ such that
\[
\rho(g)
=U\cdot\operatorname{diag}\{\chi(g)\}_{\chi\in\hat{G}} \cdot U^{-1}
\]
for every $g\in G$.
Here, $\hat{G}$ denotes the group of characters $\chi\colon G\to\mathbb{C}^\times$.
As such,
\[
\bigg\|\sum_{g\in G}\epsilon_g\rho(g)\bigg\|
=\bigg\|\sum_{g\in G}\epsilon_g\operatorname{diag}\{\chi(g)\}_{\chi\in\hat{G}}\bigg\|
=\bigg\|\sum_{g\in G}\epsilon_ga_g\bigg\|_\infty,
\]
where $a_g\in\mathbb{C}^n$ has coordinates indexed by $\hat{G}$ and is defined by $(a_g)_\chi:=\chi(g)$.
Given $z\in \mathbb{C}^n$, denote $\hat{z}:=(\operatorname{Re}z,\operatorname{Im}z)\in\mathbb{R}^{2n}$.
Then $\|\hat{z}\|_\infty\leq\|z\|_\infty\leq\sqrt{2}\|\hat{z}\|_\infty$.
In particular, $\|\hat{a}_g\|_\infty\leq1$ for every $g\in G$, and so Proposition~\ref{prop.spencer} delivers $\epsilon_g\in\{-1,1\}$ for each $g\in G$ such that
\[
\bigg\|\sum_{g\in G}\epsilon_g\rho(g)\bigg\|
=\bigg\|\sum_{g\in G}\epsilon_ga_g\bigg\|_\infty
\leq\sqrt{2}\cdot\bigg\|\sum_{g\in G}\epsilon_g\hat{a}_g\bigg\|_\infty
\lesssim\sqrt{n}.
\]

\medskip

\noindent
\textbf{Case II:} $G$ is simple and nonabelian.
Let $\{r_g\}_{g\in G}$ denote independent Rademacher variables, and let $\{x_g\}_{g\in G}$ denote independent standard Gaussian variables.
Then $x_g=\operatorname{sgn}(x_g)|x_g|$ has the same distribution as $r_g|x_g|$, and so Jensen's inequality gives
\[
\mathbb{E}\|X_G\|
=\mathbb{E}_r\mathbb{E}_x\bigg\|\sum_{g\in G}r_g|x_g|\rho(g)\bigg\|
\geq \mathbb{E}_r\bigg\|\sum_{g\in G}r_g\big(\mathbb{E}|x_g|\big)\rho(g)\bigg\|
=\sqrt{\tfrac{2}{\pi}}\cdot\mathbb{E}\bigg\|\sum_{g\in G}r_g\rho(g)\bigg\|.
\]
We rearrange and apply Theorem~\ref{thm.main}(b) to obtain
\[
\min_\epsilon\bigg\|\sum_{g\in G}\epsilon_g\rho(g)\bigg\|
\leq\mathbb{E}\bigg\|\sum_{g\in G}r_g\rho(g)\bigg\|
=\sqrt{\tfrac{\pi}{2}}\cdot\mathbb{E}\|X_G\|
\asymp\sqrt{n}.
\qedhere
\]
\end{proof}

\section{Discussion}

In this paper, we introduced a family of random matrices that traverses the extremes of the noncommutative Khintchine inequality, while being amenable to analysis by representation theory.
In the language of Lemma~\ref{lem.diagonalization}, we have
\[
\tfrac{1}{\sqrt{n}}\mathbb{E}\|X_G\|
\asymp\mathbb{E}\max_{\pi\in\Sigma}\|\tfrac{1}{\sqrt{d_\pi}}Z_\pi\|
\lesssim m(G),
\]
and furthermore, the bound $m(G)$ is tight when $G$ is abelian or simple.

It would be interesting to prove Theorem~\ref{thm.matrix spencer group case} for more general groups, perhaps by somehow interpolating between the abelian and simple nonabelian cases.
The authors were able to adapt Gluskin's volume argument~\cite{Gluskin:89} by estimating the tail probability of the spectral norm of the block-diagonalized random matrix in Lemma~\ref{lem.diagonalization}, and then using the Gaussian correlation inequality in place of Sidak's lemma to produce a partial coloring, but we were not able to iterate the procedure.

As a generalization of our setting, one might consider any $A_1,\ldots,A_n\in\mathbb{R}^{d\times d}$ such that each entry of each matrix is either $0$ or $1$, and $\sum_{i=1}^nA_i$ is the all-ones matrix.
For example, this occurs whenever $\{A_i\}_{i=1}^n$ are the adjacency matrices of an association scheme.
The Gaussian series of these matrices can be thought of as a ``patterned Gaussian matrix'' (cf.\ Section~3.2.1 in~\cite{BandeiraBvH:21}), in which the entries can be partitioned into independent batches, with entries in each batch being identical.
Such examples are also interesting in the context of the noncommutative Khinchine inequality, but the group case is particularly tractable thanks to its algebraic structure.

\section*{Acknowledgments}

This work was initiated at the 2021 Oberwolfach workshop on Applied Harmonic Analysis and Data Science.
The first two authors thank Jess Banks for insightful discussions on the subject of this paper.
This work was partially supported by ONR Award N00014-20-1-2335 and a Simons Investigator Award from the Simons Foundation to Daniel Spielman.

\end{document}